\begin{document}
\renewcommand{\refname}{References}
\thispagestyle{empty}

\newtheorem{Theorem}{Theorem}
\newtheorem{Corollary}{Corollary}
\newtheorem{Example}{Example}
\newtheorem{Remark}{Remark}
\newtheorem{Proposition}{Proposition}
\newtheorem{Definition}{Definition}

\def\proofname{Proof}

\title[A Superclass of the Posinormal Operators]{A Superclass of the Posinormal Operators}

\author{Henry Crawford Rhaly Jr.}
\address{Henry Crawford Rhaly Jr.
\newline\hphantom{iii} Jackson, MS  39206, U.S.A.}
\email{rhaly@member.ams.org}

\maketitle {\small
\begin{quote}
\noindent{\sc Abstract. }  The starting place is a brief proof of a well-known result, the hyponormality of $C_k$ (the generalized Ces\`{a}ro operator of order one) for $k \geq 1$.   This leads to the definition of a superclass of the posinormal operators.   It is shown that all the injective unilateral weighted shifts belong to this superclass.

Sufficient conditions are determined for an operator in this superclass to be posinormal and hyponormal.   A connection is established between this superclass and some recently-published sufficient conditions for a lower triangular factorable matrix to be a hyponormal bounded linear operator on $\ell^2$.\medskip

\noindent{\bf Keywords:} posinormal operator, hyponormal operator, unilateral weighted shift, factorable matrix
 \end{quote}
}

\vspace{2mm}

\section{Introduction}

\subsection{Preliminaries}\

\vspace{2mm}

If $B(H)$ denotes the set of all bounded linear operators on a Hilbert space $H$, then $A \in B(H)$ is said to be \textit{posinormal} if there exists a positive operator $P \in B(H)$ satisfying $AA^*=A^*PA$.   The operator $A$ is \textit{coposinormal} if $A^*$ is posinormal.  From [\textbf{8}, Theorem 2.1], we know that $A$ is posinormal if and only if \[\gamma^2A^*A \geq AA^*\] for some $\gamma \geq 0$.  $A$ is \textit{hyponormal} when $\gamma=1$.   The operator $A$ is \textit{dominant} if $Ran (A - \lambda) \subset Ran (A - \lambda)^*$ for all $\lambda$ in the spectrum of $A$; $A$ is dominant if and only if $A-\lambda$ is posinormal for all complex numbers $\lambda$ ([\textbf{8}, Proposition 3.5]).  Hyponormal operators are necessarily dominant.   If $A$ is posinormal, then $Ker A \subset Ker A^*$; see [\textbf{8}, Corollary 2.3].

A lower triangular infinite matrix $M =  [m_{ij}] $, acting through multiplication to give a bounded linear operator on  $\ell^2$, is {\it factorable} if its entries are

\[m_{ij} = \left \{ \begin{array}{lll}
a_ic_j & if  &  j \leq i\\
0 & if & j > i \end{array}\right.\]
where $a_i$ depends only on $i$ and $c_j$ depends only on $j$;  the matrix $M$ is {\it terraced} if $c_j = 1$ for all $j$.    

\subsection{The Motivating Example}\

\vspace{2mm}

For fixed $k > 0$, the generalized Ces\`{a}ro matrices of order one are the terraced matrices $C_k$ that occur when $a_i = \frac {1}{k+i}$ for all $i$.   

\begin{Proposition}  $C_k$ is posinormal for all $k>0$, and $C_k$ is hyponormal for all $k \geq 1$. 
\end{Proposition}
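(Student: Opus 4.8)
The plan is to build everything on the factorization $C_k = DL$, where $D = \mathrm{diag}(a_i)$, $a_i = 1/(k+i)$, and $L$ is the lower-triangular matrix of $1$'s. Since $L = (I-S)^{-1}$ for the unilateral shift $S$ (with $Se_j = e_{j+1}$), the operator $C_k$ is injective with (unbounded) inverse $C_k^{-1} = (I-S)E$, $E := D^{-1} = \mathrm{diag}(k+i)$; a one-line computation shows $C_k^*$ is injective as well, so $C_kC_k^*$ and $C_k^*C_k$ are positive and injective. The natural candidate for the posinormality relation $AA^* = A^*PA$ is then $P := QQ^*$, where $Q$ formally equals $(C_k^*)^{-1}C_k$ and, multiplied out, collapses to the \emph{bounded} operator $Q = \Delta(C_k - S^*)$ with $\Delta = \mathrm{diag}\!\big(\tfrac{k+i}{k+i+1}\big)$, $\|\Delta\|\le 1$. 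I would then verify the bounded identity $C_k^*Q = C_k$, whose entries reduce to the telescoping sum $\sum_{i\ge m}\tfrac{1}{(k+i)(k+i+1)} = \tfrac{1}{k+m}$. Taking adjoints gives $Q^*C_k = C_k^*$, and together these yield $C_k^*PC_k = (C_k^*Q)(Q^*C_k) = C_kC_k^*$. Since $P = QQ^*\ge 0$ is bounded, $C_k$ is posinormal for every $k>0$.

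For hyponormality I would use the same relation in the form $C_k^*C_k - C_kC_k^* = C_k^*(I-P)C_k$. Because $C_k$ has dense range, this operator is positive if and only if $I-P\ge 0$, i.e. if and only if $P\le I$ (equivalently $\|Q\|\le 1$); so the task is to show $P\le I$ for $k\ge 1$. This is cleanest after passing to inverses: $P\le I$ is equivalent to $C_kC_k^*\le C_k^*C_k$, hence to $(C_k^*C_k)^{-1}\le (C_kC_k^*)^{-1}$. The reward for the factorization is that both inverses are tridiagonal: using $C_k = D(I-S)^{-1}$ and $S^*S = I$, one finds that $(C_k^*C_k)^{-1}$ has diagonal $(k+i)^2+(k+i-1)^2$ with off-diagonal $-(k+i)^2$, while $(C_kC_k^*)^{-1}$ has diagonal $2(k+i)^2$ with off-diagonal $-(k+i)(k+i+1)$. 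Their difference is the Jacobi matrix with diagonal $k^2$ (corner), then $(k+i)^2-(k+i-1)^2 = 2(k+i)-1$, and off-diagonal $-(k+i)$.

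To prove this difference positive I would run the Cholesky (pivot) recursion $p_0 = k^2$, $p_i = \big(2(k+i)-1\big) - (k+i-1)^2/p_{i-1}$, and establish by induction the clean bound $p_i\ge k+i$: the base case $p_0 = k^2\ge k$ is exactly the hypothesis $k\ge 1$, and if $p_{i-1}\ge k+i-1$ then $(k+i-1)^2/p_{i-1}\le k+i-1$, whence $p_i\ge\big(2(k+i)-1\big)-(k+i-1) = k+i$. Positivity of all pivots for every truncation gives the form inequality, hence hyponormality for $k\ge 1$. (As a check, at $k=1$ the recursion yields $p_i = i+1$, so the bound is sharp there, while for $0<k<1$ the base case $p_0 = k^2\ge k$ already fails, consistent with the threshold being genuine.)

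The step I expect to be the crux is the passage to inverses in the hyponormality argument: $C_k^*C_k$ and $C_kC_k^*$ are injective but not bounded below, so their inverses are unbounded, and the equivalence $C_kC_k^*\le C_k^*C_k \Leftrightarrow (C_k^*C_k)^{-1}\le (C_kC_k^*)^{-1}$ must be read as an inequality of quadratic forms and justified by the antitonicity of the operator inverse. I would make this rigorous by noting that the finitely supported vectors lie in the domains of both Jacobi matrices—each column is finitely supported, the matrices being tridiagonal—and form a common core, so it suffices to test the form inequality on them, which is precisely the content of the positive-pivot computation above. The remaining verifications (the identity $C_k^*Q = C_k$ and the two tridiagonal inverse formulas) are routine entrywise checks, each reducing to the single telescoping identity already isolated.
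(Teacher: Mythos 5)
Your posinormality half is correct, and it is genuinely different from the paper's route: you build an explicit interrupter $P=QQ^*$ from the bounded operator $Q=\Delta(C_k-S^*)$ via the identity $C_k^*Q=C_k$, which is a legitimate entrywise verification between bounded operators (the telescoping sum checks out, and matrix entries of products of bounded operators are exactly those absolutely convergent sums). The paper never inverts anything: it exhibits two diagonal matrices $Q_0=\mathrm{diag}\{k,1,1,\dots\}$ and $P_0=\mathrm{diag}\{\tfrac{k+i}{k+i+1}\}$ satisfying $C_kQ_0C_k^*=C_k^*P_0C_k$, writes $\delta C_k^*C_k-C_kC_k^*=\delta C_k^*(I-P_0)C_k+C_k(\delta Q_0-I)C_k^*$ with $\delta=\max\{1/k,1\}$, and reads off both posinormality and (for $k\ge1$) hyponormality from the visible positivity of the two summands.

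Your hyponormality half has a genuine gap, and it sits exactly where you located the crux. The Cholesky induction gives you only the form inequality $\langle (C_k^*C_k)^{-1}u,u\rangle\le\langle (C_kC_k^*)^{-1}u,u\rangle$ for \emph{finitely supported} $u$; to convert that into $C_kC_k^*\le C_k^*C_k$ you need the finitely supported vectors to be a form core for $(C_kC_k^*)^{-1}$, equivalently, the Jacobi matrix $E(2I-S-S^*)E$ must be \emph{essentially self-adjoint} on the finitely supported vectors. Your stated justification --- that finitely supported vectors lie in the domains because the matrices are tridiagonal --- establishes only domain membership, and ``dense plus contained in the domain'' does not imply ``core'' for an unbounded operator. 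This is not a pedantic worry: if the Jacobi matrix were in the limit-circle case, it would have many self-adjoint extensions all agreeing on the finitely supported vectors, your form inequality there would be compatible with all of them simultaneously, and it could not single out any conclusion about the particular extension $(C_kC_k^*)^{-1}$. The danger is real here because the off-diagonal entries grow like $n^2$, so Carleman's criterion $\sum 1/|b_n|=\infty$ fails and essential self-adjointness has to be proved by hand. It does hold --- substituting $\phi_n=(k+n)\psi_n$ turns $J\psi=i\psi$ into a discrete Euler-type equation whose solutions satisfy $|\psi_n|\sim n^{0.3}$ or $|\psi_n|\sim n^{-1.3}$, so only one solution of the recurrence is square-summable, and the Green/Wronskian identity then forces $\liminf_N |b_N||\psi_N||\psi_{N+1}|\ge\|\psi\|^2$ while in fact $|b_N||\psi_N||\psi_{N+1}|\sim N^{-0.6}\to0$, killing any deficiency vector --- but this limit-point analysis is the actual mathematical content of your ``crux'' and is absent from the proposal. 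The paper's proof is worth internalizing precisely because it sidesteps all of this: by working with the bounded identity $C_kQ_0C_k^*=C_k^*P_0C_k$ rather than with inverses, it never leaves the bounded world.
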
 

\begin{proof}    If $Q:\equiv diag \{k, 1, 1, 1, ....\}$ and $P:\equiv diag\{\frac{k+i}{k+i+1}: i = 0, 1, 2, 3, ....\}$,  it can be verified that $C_k Q C^*_k =   C^*_k P C_k$ for all $k > 0$.    If $\delta:\equiv max \{\frac{1}{k},1\}$, then for all $f$ in $\ell^2$ we have

 \noindent \hspace{6mm} $\langle (\delta C^*_k C_k - C_kC^*_k) f , f \rangle = \langle (\delta C^*_k C_k - \delta C^*_k P C_k +  \delta C_k Q C^*_k - C_kC^*_k) f , f \rangle$ 

\hspace{37mm} $= \langle  \delta (I-P)C_kf,C_kf \rangle + \langle(\delta Q-I)C^*_kf,C^*_kf \rangle  \geq 0$, 

\noindent and this gives the result. 
\end{proof}

\vspace{2mm} \noindent We note that the preceding proof first appeared in [\textbf{10}]; for different proofs of the hyponormality of $C_k$ for $k \geq 1$, see [\textbf{8}], [\textbf{13}].  The key role played by the relationship $C_kQC_k^*=C_k^*PC_k$ in the proof of Proposition 1 leads us to the definition of a very large class of operators.   

\section{Definition, Properties, and Examples of \\ Supraposinormal Operators}

\begin{Definition} If $A  \in B(H)$, we say that $A$ is supraposinormal if there exist positive operators $P$ and $Q$ on $H$ such that  $AQA^*= A^*PA $, where at least one of $P$, $Q$ has dense range.  It will sometimes be convenient to refer to the ordered pair $(Q,P)$ as an interrupter pair associated with $A$.  
\end{Definition}

It is straightforward to verify that supraposinormality is a unitary invariant.  We note that a normal operator $A$ is supraposinormal with interrupter pair $(I,I)$.  

As the name suggests,  this superclass of operators contains all the posinormal operators (and hence all the hyponormal operators and all the invertible operators; see [\textbf{8}]), as well as all the coposinormal operators:   If $A$ is posinormal, then $AA^* = A^*PA$ for some positive operator $P$, so $A$ is supraposinormal with interrupter pair $(I,P)$.   If $A$ is coposinormal, then $A^*A = AQA^*$ for some positive operator $Q$, so $A$ is supraposinormal with interrupter pair $(Q,I)$.    

\begin{Proposition}  The collection $S$ of all supraposinormal operators on $H$ forms a cone in $B(H)$, and $S$ is closed under involution. 
\end{Proposition}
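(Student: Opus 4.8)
The plan is to prove the two assertions separately, in each case by directly manipulating the defining identity $AQA^{*}=A^{*}PA$ and checking that the interrupter operators (or a trivial rearrangement of them) still witness supraposinormality, with the density-of-range hypothesis surviving intact. Since both operations leave the positive operators $P$ and $Q$ essentially unchanged, no estimates are required; the work is bookkeeping about which operator occupies which slot of the identity. I read ``cone'' here as closure under multiplication by nonnegative scalars together with membership of $0$, the natural sense in which the defining identity is stable (I would not attempt closure under addition, which is not claimed and fails in general).

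First I would dispose of the cone property. Given $A\in S$ with interrupter pair $(Q,P)$, so that $AQA^{*}=A^{*}PA$, and given $\lambda\ge 0$, I would compute
\[
(\lambda A)\,Q\,(\lambda A)^{*}=\lambda^{2}\,AQA^{*}=\lambda^{2}\,A^{*}PA=(\lambda A)^{*}\,P\,(\lambda A),
\]
so the same pair $(Q,P)$ witnesses $\lambda A\in S$, and the density hypothesis on $P$ or $Q$ is untouched. The membership $0\in S$ is covered by the pair $(I,I)$, since both sides of the identity vanish when $A=0$ and $I$ has dense range. This establishes that $S$ is a cone.

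For closure under involution I would reread the defining identity from the opposite side. If $A\in S$ with pair $(Q,P)$, then $AQA^{*}=A^{*}PA$, which I rewrite as
\[
A^{*}P\,(A^{*})^{*}=(A^{*})^{*}\,Q\,A^{*}.
\]
Matching this against the supraposinormality condition for $A^{*}$, namely $A^{*}\widetilde{Q}\,(A^{*})^{*}=(A^{*})^{*}\,\widetilde{P}\,A^{*}$, I would take $\widetilde{Q}=P$ and $\widetilde{P}=Q$. Thus $A^{*}$ is supraposinormal with interrupter pair $(P,Q)$, the original pair with its two entries swapped. Since at least one of $P,Q$ has dense range, the same holds for the swapped pair, so $A^{*}\in S$.

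The only point requiring care, and the closest thing to an obstacle, is tracking the roles of $P$ and $Q$ under the adjoint: passing from $A$ to $A^{*}$ interchanges the two sides of the identity, so the interrupter pair must be reversed rather than reused. Confirming at each stage that the clause ``at least one of $P,Q$ has dense range'' is preserved (it is, since neither operation alters $P$ or $Q$ themselves, only their arrangement) completes the argument.
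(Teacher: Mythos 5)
Your proof is correct and takes essentially the same approach as the paper, which simply asserts both facts as ``easy to see'': scalar multiples reuse the same interrupter pair $(Q,P)$, and the adjoint is handled by reading the identity $AQA^{*}=A^{*}PA$ from the other side, swapping the pair to $(P,Q)$. Your write-up merely supplies the routine verifications the paper omits, including the correct observation that the dense-range clause is symmetric in $P$ and $Q$ and so survives the swap.
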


\begin{proof}  It is easy to see that $S$ is closed under scalar multiplication, so $S$ contains all $\alpha A$ for $A \in S$ and $\alpha \geq 0$, and therefore $S$ is a cone.  Moreover, it is equally easy to see that $A$ is supraposinormal if and only if $A^*$ is supraposinormal, so $S$ is closed under involution.
\end{proof}

\begin{Theorem}  Suppose $A \in B(H)$ satisfies $AQA^*= A^*PA$ for positive operators $P, Q \in B(H)$. 

\noindent (a)  If $Q$ has dense range, then $A$ is supraposinormal and $Ker A \subset Ker A^*$.    

\noindent (b)  If $P$ has dense range, then $A$ is supraposinormal and $Ker A^* \subset Ker A$.    

\noindent (c)  If $Q$ is invertible, then the supraposinormal operator $A$ is posinormal.  

\noindent (d)  If $P$ is invertible, then the supraposinormal operator $A$ is coposinormal.

\noindent (e)  If $P$ and $Q$ are both invertible, then $A$ is both posinormal and coposinormal with 

\hspace{1mm}$Ker A = Ker A^*$ and $Ran A = Ran A^*$.
\end{Theorem}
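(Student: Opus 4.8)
The plan is to treat the five parts in the stated order, since (c)--(e) lean on (a)--(b). Throughout I will work with the positive square roots $P^{1/2},Q^{1/2}$ and use the elementary facts that for a positive operator $T$ one has $Ker\,T = Ker\,T^{1/2} = (\overline{Ran\,T})^{\perp}$; in particular dense range forces injectivity of $T$, hence of $T^{1/2}$.

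For part (a), dense range of $Q$ makes $A$ supraposinormal by the very definition (the ``at least one has dense range'' clause is met). To get $Ker\,A \subset Ker\,A^*$, take $f$ with $Af=0$; then $A^*PAf=0$, so the hypothesis gives $AQA^*f=0$, whence $0=\langle AQA^*f,f\rangle = \langle Q^{1/2}A^*f,\,Q^{1/2}A^*f\rangle = \|Q^{1/2}A^*f\|^2$. Thus $Q^{1/2}A^*f=0$, and injectivity of $Q^{1/2}$ gives $A^*f=0$. Part (b) is the mirror image: from $A^*f=0$ one reaches $\|P^{1/2}Af\|^2=0$ and concludes $Af=0$ because $P$, hence $P^{1/2}$, is injective.

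For parts (c) and (d) I would pass to operator inequalities and invoke [\textbf{8}, Theorem 2.1]. If $Q$ is invertible then $Q\geq cI$ with $c=\|Q^{-1}\|^{-1}>0$, so $AQA^*\geq cAA^*$; together with $A^*PA\leq\|P\|A^*A$ and the identity $AQA^*=A^*PA$, this sandwiches $cAA^*\leq\|P\|A^*A$, i.e. $\gamma^2A^*A\geq AA^*$ with $\gamma^2=\|P\|/c$, so $A$ is posinormal (and supraposinormal by (a)). Part (d) runs symmetrically: invertibility of $P$ gives $A^*PA\geq c'A^*A$, and with $AQA^*\leq\|Q\|AA^*$ one obtains $\gamma^2AA^*\geq A^*A$, which is exactly the posinormality criterion applied to $A^*$, so $A$ is coposinormal.

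Finally, part (e) combines everything: invertibility of both $P$ and $Q$ yields posinormality from (c) and coposinormality from (d), while their dense ranges give both kernel inclusions from (a) and (b), hence $Ker\,A=Ker\,A^*$. For the range equality I would rewrite the defining identity as $(AQ^{1/2})(AQ^{1/2})^*=(A^*P^{1/2})(A^*P^{1/2})^*$ and apply Douglas' range--factorization theorem, which converts this equality of positive operators into $Ran(AQ^{1/2})=Ran(A^*P^{1/2})$; since $Q^{1/2}$ and $P^{1/2}$ are surjective, these ranges coincide with $Ran\,A$ and $Ran\,A^*$, giving $Ran\,A=Ran\,A^*$. This last step is where I expect the only real friction: the kernel and inequality arguments are routine, but passing from the operator identity to an equality of the (generally non-closed) ranges genuinely needs Douglas' theorem rather than a direct spectral computation.
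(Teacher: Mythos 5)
Your proposal is correct in all five parts, and it supplies considerably more detail than the paper, whose entire proof reads: (a), (b), (e) are ``straightforward,'' and (c), (d) follow from ``a minor adjustment in the discussion of Douglas's Theorem at the beginning of Section 2 in [\textbf{8}].'' Your kernel arguments for (a) and (b) via $\|Q^{1/2}A^*f\|^2 = \langle AQA^*f,f\rangle$ and the injectivity of a positive operator with dense range are surely the ``straightforward'' argument the author had in mind. Where you genuinely diverge is in (c) and (d): the paper's pointer is to the range-inclusion/factorization form of Douglas's theorem (from $AQA^* = A^*PA$ with $Q$ invertible one gets $Ran\,A = Ran(AQ^{1/2}) = Ran(A^*P^{1/2}) \subseteq Ran\,A^*$, which is equivalent to posinormality by [\textbf{8}]), whereas you use the majorization criterion $\gamma^2 A^*A \geq AA^*$ directly, sandwiching $c\,AA^* \leq AQA^* = A^*PA \leq \|P\|\,A^*A$ with $c = \|Q^{-1}\|^{-1}$. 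The two routes are equivalent faces of Douglas's theorem, but yours is self-contained, produces an explicit constant $\gamma^2 = \|P\|/c$, and avoids having to adapt an argument from another paper; the paper's route has the advantage of delivering the range inclusion itself, which is what (e) needs. Indeed, for (e) you correctly recognize that the range equality is the one place where the range-inclusion form of Douglas's theorem is unavoidable (since $Ran\,A$ need not be closed, no inequality argument alone will do), and your reduction $Ran(AQ^{1/2}) = Ran\,A$, $Ran(A^*P^{1/2}) = Ran\,A^*$ via surjectivity of the square roots closes the gap cleanly.
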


\begin{proof}  For (a) and (b), the proof is straightforward.  For (c) and (d), the proof requires only a minor adjustment in the discussion of Douglas's Theorem at the beginning of Section 2 in [\textbf {8}].  The proof of (e) is also straightforward.
\end{proof}

\begin{Corollary}  If $A \in B(H)$ is supraposinormal, then $Ker A \subset Ker A^*$ or $Ker A^* \subset Ker A$.
\end{Corollary}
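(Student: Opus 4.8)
The plan is to read the conclusion straight off of Theorem 1, using only the single defining feature of supraposinormality. By Definition 1, because $A$ is supraposinormal there exist positive operators $P, Q \in B(H)$ with $AQA^* = A^*PA$ in which \emph{at least one} of $P$, $Q$ has dense range. The entire argument is therefore a dichotomy according to which member of the interrupter pair is the one guaranteed to have dense range.

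First I would suppose that $Q$ has dense range; then the hypotheses of Theorem 1(a) are satisfied verbatim, and that part delivers $Ker A \subset Ker A^*$. In the remaining case $P$ has dense range, the hypotheses of Theorem 1(b) hold, and that part delivers $Ker A^* \subset Ker A$. In either case one of the two inclusions holds, which is precisely the disjunction claimed by the Corollary.

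Since all of the real content has already been packaged into Theorem 1, there is no genuine obstacle here; the only point requiring care is that Definition 1 furnishes density of range for just one of the two operators, so the conclusion must be stated as a disjunction rather than as the equality $Ker A = Ker A^*$ (the latter would need \emph{both} $P$ and $Q$ to have dense range, as in the invertible setting of Theorem 1(e)). For completeness I would recall the one-line mechanism behind Theorem 1(a): if $Af = 0$ then $A^*PAf = 0$, hence $AQA^*f = 0$, so that $\|Q^{1/2}A^*f\|^2 = \langle QA^*f, A^*f \rangle = \langle AQA^*f, f \rangle = 0$; thus $Q^{1/2}A^*f = 0$, giving $A^*f \in Ker Q = \{0\}$ because $Q$ has dense range, and therefore $A^*f = 0$. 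Part (b) is the mirror image, with the roles of $P$ and $Q$, and of $A$ and $A^*$, interchanged.
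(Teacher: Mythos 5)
Your proposal is correct and follows exactly the route the paper intends: the Corollary is stated immediately after Theorem 1 precisely so that the dichotomy on which member of the interrupter pair has dense range yields the disjunction via parts (a) and (b). Your added one-line verification of Theorem 1(a) (using $\|Q^{1/2}A^*f\|^2 = \langle AQA^*f, f\rangle$ and the fact that a positive operator with dense range is injective) is the same "straightforward" argument the paper leaves to the reader.
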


\begin{Corollary}  If $A \in B(H)$ is posinormal with an invertible interrupter, then $A$ is also coposinormal.
\end{Corollary}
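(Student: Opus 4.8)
The plan is simply to unwind the definition of posinormality and then invoke Theorem 1(d). Since $A$ is posinormal, by definition there exists a positive operator $P \in B(H)$ with $AA^* = A^*PA$. Rewriting the left-hand side as $AIA^*$ displays this identity in the form $AQA^* = A^*PA$ with $Q = I$, so $A$ is supraposinormal with interrupter pair $(I,P)$. Under the natural convention that the ``interrupter'' of a posinormal operator is the positive operator $P$ occurring in the relation $AA^* = A^*PA$, the hypothesis that this interrupter is invertible means precisely that $P$ is invertible.

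With that identification in hand, I would apply Theorem 1(d) directly: it asserts that whenever a supraposinormal operator satisfies $AQA^* = A^*PA$ with $P$ invertible, then $A$ is coposinormal. Taking $Q = I$ as above, the invertibility of $P$ is exactly the hypothesis, and the conclusion follows at once. I do not anticipate any genuine obstacle, since the statement is an immediate specialization of Theorem 1(d) to the case $Q = I$; the only point worth stating explicitly is the opening observation that a posinormal operator is automatically supraposinormal with $Q = I$ and interrupter pair $(I,P)$, after which no further work is required.
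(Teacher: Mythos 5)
Your proof is correct and matches the paper's intended argument exactly: the corollary is stated without proof precisely because it follows immediately from Theorem 1(d) once one observes, as the paper does earlier, that a posinormal operator is supraposinormal with interrupter pair $(I,P)$. Your identification of the "interrupter" as the positive operator $P$ in $AA^* = A^*PA$ is also the convention the paper inherits from its reference [\textbf{8}], so no gap remains.
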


\begin{Corollary}  If $A$ is supraposinormal with interrupter pair $(P,P)$ for some positive operator $P$, then $Ker A = Ker A^*$; also, $\sqrt{P} A \sqrt{P}$ is normal.
\end{Corollary}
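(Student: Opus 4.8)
The plan is to treat the two assertions separately, as each descends almost immediately from results already in hand. The first step is to unpack the hypothesis: taking the interrupter pair to be $(P,P)$ means setting $Q = P$ in the defining relation $AQA^* = A^*PA$, which collapses to the single identity $APA^* = A^*PA$. Moreover, since supraposinormality requires at least one interrupter to have dense range and here both interrupters equal $P$, the operator $P$ must have dense range.

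For the kernel equality, I would invoke Theorem 1 twice. Because $Q = P$ has dense range, part (a) yields $Ker A \subset Ker A^*$; because $P$ has dense range, part (b) yields $Ker A^* \subset Ker A$. The two inclusions together give $Ker A = Ker A^*$.

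For the normality claim, set $B := \sqrt{P}\,A\,\sqrt{P}$. Since $P$ is positive, $\sqrt{P}$ is self-adjoint with $\sqrt{P}\,\sqrt{P} = P$, so $B^* = \sqrt{P}\,A^*\,\sqrt{P}$. A direct computation then gives $BB^* = \sqrt{P}\,(APA^*)\,\sqrt{P}$ and $B^*B = \sqrt{P}\,(A^*PA)\,\sqrt{P}$, and the interrupter identity $APA^* = A^*PA$ shows the two bracketed factors coincide. Hence $BB^* = B^*B$, so $B$ is normal.

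Since both parts reduce to a single substitution followed by a short sandwich calculation, I do not anticipate any genuine obstacle. The only point requiring care is the bookkeeping that $\sqrt{P}$ is self-adjoint with $(\sqrt{P})^2 = P$, so that the relation $APA^* = A^*PA$ can be inserted cleanly between the two outer square roots.
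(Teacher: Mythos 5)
Your proposal is correct and follows exactly the route the paper intends: the paper states this as an unproved corollary of Theorem 1, where parts (a) and (b) (both applicable since the single interrupter $P$ must have dense range) give the two kernel inclusions, and the normality of $\sqrt{P}A\sqrt{P}$ follows from the sandwich computation you describe, using that $\sqrt{P}$ is self-adjoint with $(\sqrt{P})^2 = P$ and inserting the identity $APA^* = A^*PA$. No gaps; this is the argument.
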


\begin{Theorem}  Assume $A-\lambda$ is supraposinormal for distinct real values $\lambda = 0, r_1$, and $r_2$, and assume that the same interrupter pair $(Q,P)$ serves $A-\lambda$ in each of those three cases.  Then $Q=P$ and $Ker (A-\lambda) = Ker (A-\lambda)^*$ when $\lambda = 0, r_1$, and $r_2$.
\end{Theorem}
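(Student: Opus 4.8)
The plan is to use the reality of the three shift values $0, r_1, r_2$ to collapse the hypothesis into a single operator-valued quadratic in $\lambda$, and then to invoke the fact that a quadratic vanishing at three distinct points is identically zero. Because each $r_i$ is real, $(A-\lambda)^* = A^* - \lambda$, so expanding the defining relation $(A-\lambda)Q(A-\lambda)^* = (A-\lambda)^*P(A-\lambda)$ and grouping by powers of $\lambda$ gives
\[
(AQA^* - A^*PA) - \lambda(AQ + QA^* - A^*P - PA) + \lambda^2(Q-P) = 0 .
\]
Writing $S := AQA^* - A^*PA$, $R := AQ + QA^* - A^*P - PA$, and $D := Q-P$, the hypothesis says exactly that $S - \lambda R + \lambda^2 D = 0$ when $\lambda = 0, r_1, r_2$.

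The core of the argument is then purely algebraic. Evaluating at $\lambda = 0$ yields $S = 0$. Substituting this into the relations at $\lambda = r_1$ and $\lambda = r_2$ and dividing out the nonzero scalars $r_1, r_2$ gives $r_1 D = R = r_2 D$; subtracting and using $r_1 \neq r_2$ forces $(r_1 - r_2)D = 0$, hence $D = 0$, that is, $Q = P$ (and incidentally $R=0$ as well). Once $Q = P$ is established, each operator $A - \lambda$ for $\lambda = 0, r_1, r_2$ is supraposinormal with interrupter pair $(Q,Q)$, so Corollary 3 applies directly and delivers $Ker(A-\lambda) = Ker(A-\lambda)^*$ in each of the three cases.

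I do not anticipate a serious obstacle: after the expansion the work is linear algebra with operator coefficients, and the three-roots step is elementary. The one hypothesis that genuinely must be used is the reality of the $r_i$. If the shifts were permitted to be complex, the adjoint would contribute $\bar\lambda$ rather than $\lambda$, mixing $\lambda$ and $\bar\lambda$ in the expansion; the clean quadratic-in-$\lambda$ structure, and with it the argument that three distinct roots force every coefficient to vanish, would no longer be available. Keeping track of this is the only point requiring care.
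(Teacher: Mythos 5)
Your proposal is correct and follows essentially the same route as the paper: expand $(A-\lambda)Q(A-\lambda)^* = (A-\lambda)^*P(A-\lambda)$ using the reality of $\lambda$, use the $\lambda=0$ case to kill the quadratic-in-$A$ terms, divide by the nonzero $r_k$, subtract the two resulting relations to get $(r_1-r_2)(Q-P)=0$, and then invoke Corollary 3 for the kernel equalities. Your packaging of the computation as an operator-coefficient quadratic in $\lambda$ vanishing at three points is just a tidier way of stating the paper's argument, and your closing remark about why complex shifts would break the argument (the appearance of $\bar\lambda$) is a correct observation about why the hypothesis demands real $r_1, r_2$.
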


\begin{proof} Since $(A-\lambda)Q(A-\lambda)^* = (A-\lambda)^*P(A-\lambda)$ for $\lambda = 0, r_1$, and $r_2$, we find that for $k = 1$ and $2$, \[(A-r_k)Q(A-r_k)^* = (A-r_k)^*P(A-r_k)\] reduces to \[PA + A^*P + r_k Q = QA^* + AQ + r_k P.\]   Thus $(r_1 - r_2) Q = (r_1 - r_2) P$, so $Q=P$.  The assertion that $Ker (A-\lambda) = Ker (A-\lambda)^*$ for $\lambda = 0, r_1$, and $r_2$ follows from Corollary $3$.
\end{proof}

\begin{Definition}  For $A \in B(H)$, we say that $A$ is totally supraposinormal if $A - \lambda$ is supraposinormal for all complex numbers $\lambda$.
\end{Definition}

\begin{Theorem}  If $A \in B(H)$ is totally supraposinormal and the same two positive operators $Q,P \in B(H)$  form an interrupter pair $(Q,P)$ for $A - \lambda$ for all complex numbers $\lambda$, then $Q = P$; it also follows that $Ker (A - \lambda) = Ker (A - \lambda)^*$ for all $\lambda$.
\end{Theorem}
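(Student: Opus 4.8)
The plan is to derive both conclusions from results already established, namely Theorem 3 and Corollary 3, so almost all of the work is in place. For the equality $Q = P$, I would observe that the hypothesis of total supraposinormality with a single interrupter pair $(Q,P)$ is precisely the hypothesis of Theorem 3 restricted to any choice of distinct real values $0, r_1, r_2$: since $A - \lambda$ is supraposinormal with the common pair $(Q,P)$ for every complex $\lambda$, it is in particular so for $\lambda = 0, r_1, r_2$. Theorem 3 then applies verbatim and yields $Q = P$ at once.

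If one prefers a self-contained argument, I would expand the defining identity $(A - \lambda)Q(A - \lambda)^* = (A - \lambda)^* P (A - \lambda)$ and subtract the $\lambda = 0$ case $AQA^* = A^*PA$, which removes the terms free of $\lambda$. What remains is
\[ |\lambda|^2 (Q - P) = \bar\lambda(AQ - PA) + \lambda(QA^* - A^*P). \]
Restricting to a nonzero real parameter $\lambda = r$ collapses the right-hand side to a quantity independent of $r$, namely $(AQ - PA) + (QA^* - A^*P)$, while the left-hand side becomes $r(Q - P)$; comparing two distinct nonzero values of $r$ forces $Q - P = 0$.

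With $Q = P$ in hand, the second assertion is immediate. For each complex $\lambda$ the operator $A - \lambda$ is supraposinormal with interrupter pair $(P, P)$, so Corollary 3 applies to $A - \lambda$ and delivers $Ker (A - \lambda) = Ker (A - \lambda)^*$ for every $\lambda$. I do not anticipate a genuine obstacle here: the only point that needs care is that Theorem 3 is stated for real values of $\lambda$, but total supraposinormality furnishes supraposinormality at those real values with exactly the required common pair. The single feature distinguishing this theorem from Theorem 3 is the passage from the three distinguished kernels to the kernels at all $\lambda$, and that passage rests entirely on the observation that once $Q = P$ the common interrupter pair has the form $(P, P)$, which is precisely the trigger for Corollary 3 at every $\lambda$.
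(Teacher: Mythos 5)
Your proposal is correct and is essentially the paper's own proof: the paper disposes of the theorem in one line as ``a consequence of Theorem 2 and Corollary 3,'' which is exactly your route (the result you cite as ``Theorem 3'' is the paper's Theorem 2, the one on distinct real values $0, r_1, r_2$), and your self-contained expansion $|\lambda|^2(Q-P) = \bar\lambda(AQ-PA) + \lambda(QA^*-A^*P)$ is just that theorem's proof inlined. Your observation that, once $Q=P$, each $A-\lambda$ carries the pair $(P,P)$ and Corollary 3 applies at every $\lambda$ is precisely the intended argument.
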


\begin{proof}  This result is a consequence of Theorem $2$ and Corollary $3$.
\end{proof}

We have already observed that the class of posinormal operators is included in the class of supraposinormal operators, which is included in the class of all operators.  As we are about to see, the unilateral weighted shifts are enough to show that these inclusions are all proper.

Let $\{e_n\}$ denote the standard orthonormal basis for $\ell^2$.  

\begin{Proposition}  A unilateral weighted shift $W$ with the weight sequence 

\noindent $\{w_n : w_0 \neq 0\}_{n\geq0}$ is supraposinormal if and only if it is injective.
\end{Proposition}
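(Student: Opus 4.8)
The plan is to prove both implications by reading off the kernel structure of $W$ and, for the converse, by an explicit diagonal construction. Throughout I would fix the convention $W e_n = w_n e_{n+1}$ and record the two kernels at the outset: $Ker\,W = \overline{span}\{e_n : w_n = 0\}$ and $Ker\,W^* = \overline{span}(\{e_0\} \cup \{e_{n+1} : w_n = 0\})$. Since $w_0 \neq 0$, the vector $e_0$ lies in $Ker\,W^*$ but not in $Ker\,W$, so $Ker\,W^* \not\subset Ker\,W$ regardless of the other weights.

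For the forward implication, suppose $W$ is supraposinormal but not injective. By Corollary $1$ one of the two kernel inclusions must hold, and since $Ker\,W^* \not\subset Ker\,W$ always fails, we are forced into $Ker\,W \subset Ker\,W^*$. Choosing the least index $m$ with $w_m = 0$ (necessarily $m \geq 1$, with $w_{m-1} \neq 0$ by minimality), I get $e_m \in Ker\,W$ while $W^* e_m = \overline{w_{m-1}}\,e_{m-1} \neq 0$, so $e_m \notin Ker\,W^*$ — contradicting the inclusion. Hence supraposinormality forces injectivity.

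For the converse, assume all $w_n \neq 0$ and seek positive diagonal operators $P = diag\{p_n\}$ and $Q = diag\{q_n\}$ solving $WQW^* = W^*PW$. A direct computation renders both sides diagonal: $W^*PW = diag\{|w_n|^2 p_{n+1}\}$, while $WQW^*$ has $0$ in position $0$ and $|w_{n-1}|^2 q_{n-1}$ in position $n \geq 1$. Matching entries gives $p_1 = 0$ together with $|w_n|^2 p_{n+1} = |w_{n-1}|^2 q_{n-1}$, i.e. $p_{j+2} = q_j\,|w_j|^2/|w_{j+1}|^2$ for $j \geq 0$, with $p_0$ free. Because $p_1 = 0$, the operator $P$ cannot have dense range, so I must instead arrange that $Q$ has dense range, which for a diagonal operator means exactly that every $q_j > 0$.

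The one genuine obstacle is boundedness: the naive choice $Q = I$ forces $p_{j+2} = |w_j|^2/|w_{j+1}|^2$, which need not be bounded when the weights drop sharply. I would therefore damp $Q$ by setting $q_j := \min\{1,\,|w_{j+1}|^2/|w_j|^2\}$, which keeps each $q_j \in (0,1]$ (so $Q$ is positive, bounded, and of dense range) and simultaneously forces $p_{j+2} = \min\{|w_j|^2/|w_{j+1}|^2,\,1\} \in (0,1]$, so $P$ is positive and bounded as well. Taking $p_0 = 0$ completes the data, and by construction $WQW^* = W^*PW$ with $Q$ of dense range, so $W$ is supraposinormal. I expect the two diagonal computations and this boundedness estimate to be the only real work, with the forward direction being pure bookkeeping on the kernels through Corollary $1$.
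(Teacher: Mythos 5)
Your proposal is correct and follows essentially the same route as the paper: necessity via Corollary $1$ together with the kernel computations at $e_0$ and at the first vanishing weight, and sufficiency via an explicit diagonal interrupter pair $(Q,P)$ with $Q$ injective. The only difference is cosmetic: where you damp with $q_j = \min\{1, |w_{j+1}|^2/|w_j|^2\}$ to keep $P$ bounded, the paper takes $q_j = |w_{j+1}|^2$ (hence $p_{j+2} = |w_j|^2$), for which boundedness is automatic because the weight sequence of a bounded shift is bounded.
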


\begin{proof}  (1)  First assume that $W$ is injective, so $w_n \neq 0$ for all $n$.  If \[Q :\equiv diag \{|w_1|^2, |w_2|^2, |w_3|^2, . . . .\}\] and \[P:\equiv diag \{p_0, 0, |w_0|^2, |w_1|^2, |w_2|^2,  . . . . \}\] for some $p_0 > 0$, then it is straightforward to very that $WQW^* = W^*PW$.   Since $\{|w_1|^2, |w_2|^2, |w_3|^2, . . .\}$ is a strictly positive sequence, $Q$ is a one-to-one positive operator; thus $Q$ has dense range, so $W$ is supraposinormal.   (2) Next we assume that $W$ is not injective, so $w_n =  0$ for some $n > 0$; assume that $n$ is the smallest integer for which this holds.  We have $We_n = 0$ and $W^*e_n = \overline{w_{n-1}}e_{n-1} \neq 0$, so $Ker W \not\subset Ker W^*$.   Also, $W^*e_0 = 0$ while $We_0 = w_0 e_1 \neq 0$, so $Ker W^* \not\subset Ker W$.  It follows from Corollary $1$ that $W$ is not supraposinormal.
\end{proof}

\begin{Corollary}  Every injective unilateral weighted shift is supraposinormal.
\end{Corollary}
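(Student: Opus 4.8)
The plan is to obtain this as an immediate consequence of Proposition $5$. The only subtlety is that Proposition $5$ is stated under the standing hypothesis $w_0 \neq 0$, whereas the corollary speaks of an arbitrary injective unilateral weighted shift; so the first step is to check that injectivity already forces $w_0 \neq 0$. Recalling that a unilateral weighted shift $W$ with weight sequence $\{w_n\}_{n \geq 0}$ acts by $We_n = w_n e_{n+1}$, I would observe that if some weight $w_n$ vanished, then $e_n \in Ker\,W$, contradicting injectivity; in particular $w_0 \neq 0$, and indeed every weight is nonzero. Thus the hypothesis of Proposition $5$ is automatically in force.

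With that observation in hand, the second step is simply to invoke the ``if'' direction of Proposition $5$: since $W$ is injective and $w_0 \neq 0$, Proposition $5$ yields that $W$ is supraposinormal, which is exactly the claim. I expect no real obstacle here, because the substantive work has already been carried out inside the proof of Proposition $5$ --- namely, exhibiting the interrupter pair with $Q = diag\{|w_1|^2, |w_2|^2, |w_3|^2, \dots\}$ and $P = diag\{p_0, 0, |w_0|^2, |w_1|^2, \dots\}$, verifying the identity $WQW^* = W^*PW$, and noting that $Q$ is one-to-one and hence has dense range. The corollary merely repackages that computation once one has remarked that injectivity subsumes the $w_0 \neq 0$ assumption, so the proof reduces to these two short steps.
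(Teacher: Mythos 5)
Your proof is correct and takes essentially the same route as the paper, which states this corollary immediately after the weighted-shift proposition (Proposition 3 in the paper's numbering, not Proposition 5, which concerns factorable matrices) and leaves the deduction implicit: injectivity forces every weight, in particular $w_0$, to be nonzero, so the ``if'' direction of that proposition applies verbatim. Your added remark that the standing hypothesis $w_0 \neq 0$ is subsumed by injectivity is exactly the one-line observation the paper omits.
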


We note that a noninjective unilateral weighted shift can also be supraposinormal, as the next example illustrates.

\begin{Example}  Let $W$ denote the unilateral weighted shift with $w_0 = 0$ and $w_n = 1$ for all $n \geq 1$.  Glancing at the proof of Proposition $3$, we take $Q =  I$ and $P := diag \{1, 0, 0, 1, 1, 1, 1, . . . .\}$.  One easily verifies that $ WW^* = diag \{0, 0, 1, 1, 1, 1, . . . .\} = W^*PW$, so $W$ is posinormal and hence supraposinormal. 

\end{Example}

In fact, it can be shown that if, for some nonnegative integer $n$, $W$ satisfies  

\begin{center} ($1$) $w_k = 0$ for $0 \leq k \leq n$ and $w_k \neq 0$ for $k > n$ and \end{center}

\hspace{18mm} ($2$)  $\sup_{k > n} |\frac{w_k}{w_{k+1}}| < +\infty$, 

\noindent then $W$ is posinormal and hence also supraposinormal.

\begin{Proposition}  An injective unilateral weighted shift is posinormal if and only if $\sup_n |\frac{w_n}{w_{n+1}}| < +\infty$.
\end{Proposition}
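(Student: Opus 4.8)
The plan is to bypass the operator-$P$ form of the definition entirely and instead work with the equivalent criterion quoted in the Preliminaries, namely that $W$ is posinormal if and only if $\gamma^2 W^*W \ge WW^*$ for some $\gamma \ge 0$ ([\textbf{8}, Theorem 2.1]). For a unilateral weighted shift both operators appearing in this inequality are diagonal, so the whole question collapses to a scalar comparison of diagonal entries.

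First I would record the elementary computations $W^*We_n = |w_n|^2 e_n$ for all $n$, together with $WW^*e_n = |w_{n-1}|^2 e_n$ for $n \ge 1$ and $WW^*e_0 = 0$. Thus $W^*W = \mathrm{diag}\{|w_0|^2, |w_1|^2, |w_2|^2, \dots\}$ and $WW^* = \mathrm{diag}\{0, |w_0|^2, |w_1|^2, \dots\}$, so that
\[\gamma^2 W^*W - WW^* = \mathrm{diag}\{\gamma^2|w_0|^2,\ \gamma^2|w_1|^2 - |w_0|^2,\ \gamma^2|w_2|^2 - |w_1|^2,\ \dots\}.\]

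Next, since a diagonal operator is positive precisely when each of its diagonal entries is nonnegative, the inequality $\gamma^2 W^*W \ge WW^*$ holds if and only if $\gamma^2|w_0|^2 \ge 0$ (which is automatic) together with $\gamma^2|w_n|^2 \ge |w_{n-1}|^2$ for every $n \ge 1$. Because $W$ is injective, every $w_n$ is nonzero, so the latter conditions may be rewritten as $\gamma^2 \ge |w_{n-1}/w_n|^2$ for all $n \ge 1$, i.e. $\gamma \ge \sup_{n\ge 1}|w_{n-1}/w_n|$. After the reindexing $\sup_{n\ge 1}|w_{n-1}/w_n| = \sup_n |w_n/w_{n+1}|$, this matches the quantity in the statement exactly.

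Finally I would read off both directions. If a finite $\gamma$ with $\gamma^2 W^*W \ge WW^*$ exists, then $\sup_n |w_n/w_{n+1}| \le \gamma < +\infty$; conversely, if this supremum is finite, choosing any $\gamma$ at least as large as it makes every diagonal entry nonnegative, which yields posinormality. I do not anticipate a serious obstacle here: the only points deserving care are the (standard) fact that positivity of a diagonal operator is equivalent to nonnegativity of its entries, and the observation that injectivity is exactly what licenses dividing by $w_n$ and passing from the weights to their successive ratios.
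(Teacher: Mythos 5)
Your proposal is correct, and it is worth noting that the paper itself offers no argument at all for this proposition: its entire proof is the citation ``See [\textbf{4}]'' (Kubrusly--Duggal). Your route is therefore self-contained where the paper is not, and it uses only machinery already quoted in the Preliminaries, namely the characterization from [\textbf{8}, Theorem 2.1] that $A$ is posinormal if and only if $\gamma^2 A^*A \geq AA^*$ for some $\gamma \geq 0$. The computation itself is sound: $W^*W = \mathrm{diag}\{|w_0|^2, |w_1|^2, \dots\}$ and $WW^* = \mathrm{diag}\{0, |w_0|^2, |w_1|^2, \dots\}$ are both diagonal, positivity of a bounded diagonal operator is indeed equivalent to nonnegativity of its entries, injectivity is exactly what permits division by each $|w_n|^2$, and the reindexing $\sup_{n\geq 1}|w_{n-1}/w_n| = \sup_{n\geq 0}|w_n/w_{n+1}|$ is harmless. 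Both directions then follow immediately, as you say: a finite posinormality constant $\gamma$ bounds the supremum, and conversely a finite supremum serves as a valid $\gamma$. What your approach buys is transparency and economy --- the reader sees in a few lines why the ratio condition is precisely the obstruction, rather than being sent to an external reference; what the paper's citation buys is brevity and an implicit pointer to the broader context in [\textbf{4}], where posinormality of shifts is treated alongside related questions. One small point of care if this were to be written out in full: you should state explicitly (as you do implicitly) that injectivity of the shift is equivalent to all weights being nonzero, since that equivalence is what makes the division step legitimate at every index.
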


\begin{proof}  See [\textbf{4}].  
\end{proof}

Proposition $3$  gives us a collection of operators that are supraposinormal, as well as a collection of operators that are not supraposinormal.  We emphasize the latter now with a specific example.

\begin{Example}  Suppose $W$ is the unilateral weighted shift with weights $w_{2n}=1$ and $w_{2n+1}=0$ for all $n$.    It follows from Proposition $3$ that $W$ cannot be supraposinormal.
\end{Example}  

Next we present an example of a supraposinormal operator that is nether posinormal nor coposinormal.

\begin{Example}  Let $W$ denote the unilateral weighted shift with weights $w_{2n}=1$ and $w_{2n+1}=1/n$ for all $n$.   By Proposition $3$, $W$ is supraposinormal.  Since $\sup_n |\frac{w_n}{w_{n+1}}| = +\infty$, it follows from Proposition $4$ that $W$ is not posinormal.  Since $W^*e_0 = 0$ while $We_0 = e_1$, we see that $Ker W^* \not\subset Ker W$, so $W$ is also not coposinormal.   
\end{Example}

We model the following proof on that for the motivating example.

\begin{Theorem}  Assume that the bounded linear operator $A$ on $H$ is supraposinormal with $AQA^*= A^*PA $.   If  $D$ is a positive operator satisfying  

\noindent (1) \hspace{20mm} $\delta_1Q \geq D \geq \delta_2 P \geq 0$ \hspace{5mm} for some constants $\delta_1, \delta_2 >0$,

\noindent  then $\sqrt{D}A\sqrt{D}$ is posinormal.   If (1) holds for some pair of positive constants $\delta_1, \delta_2$ with $\delta_1\leq \delta_2$, then $\sqrt{D}A\sqrt{D}$ is hyponormal.
\end{Theorem}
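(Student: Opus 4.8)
The plan is to set $B := \sqrt{D}\,A\,\sqrt{D}$ and to read the posinormality constant directly off the sandwich inequality (1). Since $\sqrt{D}$ is a bounded positive (hence self-adjoint) operator, $B$ is bounded with $B^* = \sqrt{D}\,A^*\,\sqrt{D}$, and a direct computation gives
\[ BB^* = \sqrt{D}\,(ADA^*)\,\sqrt{D} \qquad\text{and}\qquad B^*B = \sqrt{D}\,(A^*DA)\,\sqrt{D}. \]
Thus everything reduces to comparing $ADA^*$ with $A^*DA$, and the supraposinormality relation is exactly what links the two.

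The key step is to recognize that the single operator $R := AQA^* = A^*PA$ bounds $ADA^*$ from above and $A^*DA$ from below. First I would multiply the upper bound $D \leq \delta_1 Q$ on the left by $A$ and on the right by $A^*$; because $X \mapsto AXA^*$ sends positive operators to positive operators, it preserves the order and yields $ADA^* \leq \delta_1\,AQA^* = \delta_1 R$. Then I would multiply the lower bound $D \geq \delta_2 P$ on the left by $A^*$ and on the right by $A$, obtaining $A^*DA \geq \delta_2\,A^*PA = \delta_2 R$. Combining the two gives
\[ A^*DA \;\geq\; \delta_2 R \;\geq\; \frac{\delta_2}{\delta_1}\,ADA^*. \]

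Finally I would conjugate this inequality by $\sqrt{D}$ — again an order-preserving operation — to obtain $B^*B \geq \frac{\delta_2}{\delta_1}\,BB^*$. Setting $\gamma := \sqrt{\delta_1/\delta_2} \geq 0$, this reads $\gamma^2 B^*B \geq BB^*$, so $B = \sqrt{D}\,A\,\sqrt{D}$ is posinormal by [\textbf{8}, Theorem 2.1]. When $\delta_1 \leq \delta_2$ we have $\frac{\delta_2}{\delta_1} \geq 1$, so $B^*B \geq \frac{\delta_2}{\delta_1}\,BB^* \geq BB^*$ (using $BB^* \geq 0$), whence $B$ is hyponormal. I expect no serious difficulty here; the only point requiring care is the bookkeeping in the two multiplication steps — pairing the upper bound on $D$ with $A(\cdot)A^*$ and the lower bound with $A^*(\cdot)A$ so that both sides collapse onto the common operator $R$. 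This correct pairing is precisely what lets the two appearances of $AQA^* = A^*PA$ reinforce one another, mirroring the add-and-subtract device used in the proof of Proposition 1.
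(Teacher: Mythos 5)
Your proposal is correct and is essentially the paper's own proof recast as a chain of operator inequalities: your two conjugation steps (pairing $\delta_1 Q \geq D$ with $A(\cdot)A^*$ and $D \geq \delta_2 P$ with $A^*(\cdot)A$, linked through the common operator $R = AQA^* = A^*PA$, then conjugating by $\sqrt{D}$) are precisely the two nonnegative terms produced by the paper's add-and-subtract device in its inner-product computation. The resulting constant $\gamma^2 = \delta_1/\delta_2$ and the hyponormality conclusion when $\delta_1 \leq \delta_2$ agree with the paper's, so there is nothing to fix.
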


\begin{proof}  We find that 

\noindent $\langle [\frac{\delta_1}{\delta_2}(\sqrt{D}A^*\sqrt{D})(\sqrt{D}A\sqrt{D})-(\sqrt{D}A\sqrt{D})(\sqrt{D}A^*\sqrt{D})]f,f\rangle =$

\noindent $\langle [\frac{\delta_1}{\delta_2}\sqrt{D}A^*DA\sqrt{D} - \delta_1\sqrt{D}A^* P A\sqrt{D} + \delta_1\sqrt{D}A Q A^*\sqrt{D}- \sqrt{D}ADA^*\sqrt{D}]f,f \rangle =$ 

\noindent $\delta_1 \langle (\frac{1}{\delta_2}D-P)A\sqrt{D}f,A\sqrt{D}f \rangle + \langle(\delta_1Q-D)A^*\sqrt{D}f,A^*\sqrt{D}f \rangle  \geq 0$ 

\noindent for all $f$ in $\ell^2$, as needed.
\end{proof}

\begin{Corollary}   If $A$ is supraposinormal operator on $H$ with $AQA^*= A^*PA $ and  

\noindent (2) \hspace{20mm} $\delta_1Q \geq I \geq \delta_2P \geq 0$  \hspace{5mm}  for some constants $\delta_1, \delta_2 >0$,   

\noindent then $A$ is posinormal.  If (2) holds for some pair of positive constants $\delta_1, \delta_2$ with $\delta_1\leq \delta_2$, then $A$ is hyponormal.
\end{Corollary}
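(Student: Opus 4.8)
The plan is to recognize that this Corollary is nothing more than the specialization of the preceding Theorem to the case $D = I$. Condition (2) is obtained from condition (1) simply by substituting the identity operator $I$ for the positive operator $D$, and since $I$ is itself a positive operator, this substitution is admissible.

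First I would set $D = I$ in the hypothesis of the Theorem. Because $\sqrt{I} = I$, we have $\sqrt{D}\,A\,\sqrt{D} = IAI = A$, and the chain $\delta_1 Q \geq I \geq \delta_2 P \geq 0$ is precisely condition (1) evaluated at this choice of $D$. Then I would invoke the conclusion of the Theorem directly: under (1) the operator $\sqrt{D}A\sqrt{D}$ is posinormal, which now reads ``$A$ is posinormal''; and if in addition $\delta_1 \leq \delta_2$, then $\sqrt{D}A\sqrt{D}$ is hyponormal, which now reads ``$A$ is hyponormal''. This delivers both assertions of the Corollary.

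There is essentially no obstacle here beyond verifying that $I$ is a legitimate value of $D$, that is, that $I \geq 0$ and $\sqrt{I} = I$, both of which are immediate. Should a self-contained argument be preferred over citing the Theorem, the identical inequality computation displayed in the proof of the Theorem carries over verbatim with $D$ replaced by $I$, since every occurrence of $\sqrt{D}$ then simply disappears and the nonnegativity of the two inner-product terms is preserved.
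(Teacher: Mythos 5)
Your proposal is correct and matches the paper's intent exactly: the Corollary is stated immediately after Theorem 4 with no separate proof, precisely because it is the specialization $D = I$, so that $\sqrt{D}A\sqrt{D} = A$ and condition (1) becomes condition (2). Your verification that $I$ is a valid choice of $D$ is all that is needed.
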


We note that Theorem $4$ and Corollary $5$ above are restricted to the case where $P$ is dominated by a multiple of $Q$.

In the next section we will apply this theorem and its corollary to factorable matrices.    We note that a large collection of examples of supraposinormal factorable matrices $M$ having interrupter pair $(I,P)$ with $P$ diagonal and $I \geq P$, in which case $M$ is hyponormal, can be found in [\textbf{11}].

\section{Application to Factorable Matrices}

Throughout this section we will restrict our attention to those factorable matrices $M$ that are lower triangular and give bounded linear operators on $H = \ell^2$.

\subsection{Sufficient Conditions for Hyponormality of a Factorable Matrix}

\begin{Theorem}   If $M$ is a supraposinormal factorable matrix with $MQM^*= M^*PM$, and if $P$, $Q$, and $D$ are diagonal matrices satisfying \[Q \geq D \geq P \geq 0,\] then $\sqrt{D}M\sqrt{D}$ is a hyponormal factorable matrix.
\end{Theorem}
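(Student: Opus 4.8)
The plan is to obtain hyponormality directly from Theorem 4 and then verify separately that the conjugated operator retains factorable form. Observe that the hypothesis $Q \geq D \geq P \geq 0$ is precisely condition (1) of Theorem 4 with the choice $\delta_1 = \delta_2 = 1$; since these constants satisfy $\delta_1 \leq \delta_2$, Theorem 4 immediately yields that $\sqrt{D}M\sqrt{D}$ is hyponormal. Thus the analytic content of the statement requires no new argument beyond invoking the earlier theorem with these trivial constants, and the supraposinormality relation $MQM^* = M^*PM$ is exactly what feeds into that application.

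What remains is to confirm that $\sqrt{D}M\sqrt{D}$ is again a lower triangular factorable matrix. Here I would write $D = diag\{d_0, d_1, d_2, \ldots\}$ with each $d_i \geq 0$, so that $\sqrt{D} = diag\{\sqrt{d_0}, \sqrt{d_1}, \ldots\}$, and compute the $(i,j)$ entry of the triple product directly. Using $m_{ij} = a_i c_j$ for $j \leq i$ and $m_{ij} = 0$ for $j > i$, the entry of $\sqrt{D}M\sqrt{D}$ equals $\sqrt{d_i}\,m_{ij}\,\sqrt{d_j}$, which is $(\sqrt{d_i}\,a_i)(c_j\,\sqrt{d_j})$ when $j \leq i$ and vanishes otherwise. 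Setting $\tilde{a}_i := \sqrt{d_i}\,a_i$ and $\tilde{c}_j := c_j\,\sqrt{d_j}$ exhibits the product in factorable form, with $\tilde{a}_i$ depending only on $i$ and $\tilde{c}_j$ depending only on $j$, and the lower triangular pattern is inherited from $M$.

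Finally, boundedness of $\sqrt{D}M\sqrt{D}$ as an operator on $\ell^2$ follows because $D$ is a bounded positive diagonal operator, hence so is $\sqrt{D}$, and $M$ is bounded by hypothesis, so the product of bounded operators is bounded. I do not anticipate any genuine obstacle: the only points requiring care are the diagonal-times-factorable-times-diagonal entry computation, which is routine, and the recognition that the constants $\delta_1 = \delta_2 = 1$ place us squarely in the hyponormal case of Theorem 4.
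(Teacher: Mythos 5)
Your proposal is correct and follows exactly the paper's route: the paper's entire proof is that the statement ``is an immediate consequence of Theorem 4,'' which is precisely your invocation of that theorem with $\delta_1 = \delta_2 = 1$. Your additional explicit check that $\sqrt{D}M\sqrt{D}$ retains factorable form (via $\tilde{a}_i = \sqrt{d_i}\,a_i$, $\tilde{c}_j = c_j\sqrt{d_j}$) is a detail the paper leaves tacit, and it is carried out correctly.
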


\begin{proof}  This is an immediate consequence of Theorem 4.
\end{proof}

\begin{Example}  (Generalized Ces\`{a}ro matrices of order one for $k \geq 1$) We have already seen that $C_k$ is hyponormal for $k \geq 1$ and that $C_k$ is posinormal for all $k>0$.  $C_k$ is known to be non-hyponormal when $0 < k < 1$ (see [\textbf{$8$}]).   If $k \geq 1$ and $D:\equiv diag \{d_i : i = 0, 1, 2, 3, ....\}$ where $\frac{k}{k+1} \leq d_0 \leq k$ and $\frac{k+i}{k+i+1} \leq d_i \leq 1$ for $i = 1, 2, 3, ....$, then Proposition $1$ (proof) and Theorem $5$ together guarantee that $\sqrt{D}C_k\sqrt{D}$ is another hyponormal factorable matrix.   We note that Theorem $1$(d) can be used to prove that $C_k$ is coposinormal for all $k > 0$.  
\end{Example}

The following proposition will be useful to us throughout the remainder of this section.

\begin{Proposition}  Assume that the factorable matrix $M=M(\{a_i\},\{c_j\})$ is a bounded operator on $\ell^2$ with $a_i , c_j > 0$ for all $i, j$, $\{\frac{a_k}{c_k}\}$ is strictly decreasing to $0$.  If \[P :\equiv diag \{\frac{c_{k+1}a_k - c_ka_{k+1}}{c_kc_{k+1}a_k^2}: k = 0, 1, 2, ....\} \in B(\ell^2),\] \[Q :\equiv diag \{\frac{1}{c_0a_0}, \frac{c_{k+1}a_k - c_ka_{k+1}}{c_{k+1}^2a_ka_{k+1}}: k = 0, 1, 2, ....\}\in B(\ell^2),\] then $M$ is supraposinormal with interrupter pair $(Q , P)$.
\end{Proposition}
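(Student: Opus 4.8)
**The plan is to verify the operator identity $MQM^*=M^*PM$ entrywise and then invoke the dense-range condition in the definition of supraposinormality.**

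The plan is to establish the identity $MQM^* = M^*PM$ by computing both sides as matrices and comparing entries. Since $M$ is factorable with $m_{ij}=a_ic_j$ for $j\le i$ (and $0$ otherwise), and $P,Q$ are diagonal, the products are explicit finite sums. Writing $Q=\mathrm{diag}\{q_k\}$ and $P=\mathrm{diag}\{p_k\}$ with $q_0=\frac{1}{c_0a_0}$, $q_{k+1}=\frac{c_{k+1}a_k-c_ka_{k+1}}{c_{k+1}^2a_ka_{k+1}}$, and $p_k=\frac{c_{k+1}a_k-c_ka_{k+1}}{c_kc_{k+1}a_k^2}$, I would compute the $(i,j)$ entry of each side. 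For $MQM^*$ the $(i,j)$ entry is $\sum_{k} m_{ik}q_k\overline{m_{jk}}=a_i\overline a_j\sum_{k\le\min(i,j)}q_k c_k^2$, while for $M^*PM$ the $(i,j)$ entry is $\sum_k \overline{m_{ki}}p_k m_{kj}=c_i c_j\sum_{k\ge\max(i,j)}p_k a_k^2$. Thus the whole identity reduces, after assuming $i\le j$ by symmetry and factoring out the common scalars, to the scalar identity
\[
a_ia_j\sum_{k=0}^{i}q_kc_k^2 \;=\; c_ic_j\sum_{k=j}^{\infty}p_ka_k^2 .
\]

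The key step will be to evaluate these two sums in closed form. The partial sum $\sum_{k=0}^{i}q_kc_k^2$ should telescope: using $q_0c_0^2=\frac{c_0}{a_0}$ and $q_{k+1}c_{k+1}^2=\frac{c_{k+1}a_k-c_ka_{k+1}}{a_ka_{k+1}}=\frac{c_{k+1}}{a_{k+1}}-\frac{c_k}{a_k}$, the sum collapses to $\frac{c_i}{a_i}$. Similarly, $p_ka_k^2=\frac{c_{k+1}a_k-c_ka_{k+1}}{c_kc_{k+1}}=\frac{a_k}{c_k}-\frac{a_{k+1}}{c_{k+1}}$ telescopes, and because $\frac{a_k}{c_k}\downarrow 0$ by hypothesis, the tail sum $\sum_{k=j}^{\infty}p_ka_k^2$ converges to $\frac{a_j}{c_j}$. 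Substituting these back, the left side becomes $a_ia_j\cdot\frac{c_i}{a_i}=a_jc_i$ and the right side becomes $c_ic_j\cdot\frac{a_j}{c_j}=c_ia_j$, so the two sides agree and the identity holds.

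Finally, I would confirm positivity and the dense-range requirement. The strict decrease of $\{a_k/c_k\}$ forces each numerator $c_{k+1}a_k-c_ka_{k+1}>0$, so every $p_k>0$ and every $q_{k+1}>0$, and $q_0>0$ trivially; hence $P$ and $Q$ are positive operators (they are assumed to lie in $B(\ell^2)$). Moreover $Q$ is a diagonal operator with strictly positive diagonal, so it is injective and therefore has dense range, which is exactly the condition needed for supraposinormality with interrupter pair $(Q,P)$. I expect the main obstacle to be purely bookkeeping: getting the index ranges right in the two matrix products (in particular recognizing that the $Q$-side produces a partial sum up to $\min(i,j)$ while the $P$-side produces a tail from $\max(i,j)$), and confirming that the telescoping tail sum genuinely converges to $a_j/c_j$ rather than to $a_j/c_j$ minus a nonzero limit, which is precisely where the hypothesis $a_k/c_k\to 0$ is used.
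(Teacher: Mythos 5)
Your proposal is correct and follows essentially the same route as the paper: the paper's proof simply asserts that it is "straightforward to verify" that both $M^*PM$ and $MQM^*$ equal the matrix whose $(i,j)$ entry is $c_{\min(i,j)}a_{\max(i,j)}$, and then notes that the positive diagonal operators $P$ and $Q$ are injective, hence have dense range. Your telescoping computation (partial sums of $q_kc_k^2$ collapsing to $c_i/a_i$, tails of $p_ka_k^2$ collapsing to $a_j/c_j$ via $a_k/c_k\downarrow 0$) is exactly the verification the paper leaves to the reader, carried out correctly.
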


\begin{proof}  Once the hypothesis is assumed, it is straightforward to verify that \[M^*PM =  \left(\begin{array}{ccccc}
c_0a_0 & c_0a_1 & c_0a_2 & c_0a_3  & \ldots\\
c_0a_1 & c_1a_1 & c_1a_2 & c_1a_3  & \ldots\\
c_0a_2 & c_1a_2 & c_2a_2 & c_2a_3  & \ldots\\
c_0a_3 & c_1a_3 & c_2a_3 & c_3a_3  & \ldots\\
\vdots & \vdots & \vdots& \vdots & \ddots \end{array}\right) = MQM^*.\]  Clearly the positive operators $P$ and $Q$ are one-to-one, so they both have dense range; thus $M$ is supraposinormal with interrupter pair $(Q , P)$
\end{proof}

Our goal now is to obtain restrictions on the sequences $\{a_i\}, \{c_j\}$ that are sufficient to guarantee that the factorable matrix $M :\equiv M (\{a_i\},\{c_j\}) \in B(\ell^2)$ is hyponormal.       

\begin{Theorem}  Assume that $M=M(\{a_i\},\{c_j\})$ is a bounded operator on $\ell^2$ with $a_i , c_j > 0$ for all $i, j$, $\{\frac{a_k}{c_k}\}$ is strictly decreasing to $0$, and $\{\frac{a_kc_k}{a_{k+1}c_{k+1}}\}$ is bounded.   If there exists a $\delta > 0$ such that $0 < c_0a_0 \leq \delta$ and $\frac{1}{\delta}\frac{a_k}{c_k}(\delta-c_ka_k)\leq \frac{a_{k+1}}{c_{k+1}} \leq \frac{\delta \frac{a_k}{c_k}}{\delta+c_{k+1}^2\frac{a_k}{c_k}}$ for each nonnegative integer $k$, then $M$ is hyponormal.
\end{Theorem}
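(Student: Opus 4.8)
The plan is to obtain this as a direct application of Proposition 5 together with Corollary 5, by translating the two displayed scalar inequalities into the operator inequality $\delta_1 Q \geq I \geq \delta_2 P \geq 0$ of Corollary 5 with the single choice $\delta_1 = \delta_2 = \delta$. First I would abbreviate $r_k := a_k/c_k$ and rewrite the diagonal entries of the interrupter pair furnished by Proposition 5 in the simpler form $p_k = (r_k - r_{k+1})/(c_k^2 r_k^2)$ and $q_{k+1} = (r_k - r_{k+1})/(c_{k+1}^2 r_k r_{k+1})$, with $q_0 = 1/(c_0 a_0)$. Because $\{r_k\}$ is strictly decreasing, $r_k - r_{k+1} > 0$, so every entry of $P$ is nonnegative and every entry of $Q$ is positive.

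Before invoking Proposition 5 I must confirm that $P$ and $Q$ actually define bounded operators on $\ell^2$, since this is part of that proposition's hypothesis. For $P$ this falls out of the lower inequality: a one-line cross-multiplication shows that $\frac{1}{\delta}\,r_k(\delta - c_k a_k) \leq r_{k+1}$ is equivalent to $p_k \leq 1/\delta$, whence $P \leq (1/\delta)I$ is bounded. For $Q$ I would use the identity $q_{k+1} = p_k \cdot (a_k c_k)/(a_{k+1} c_{k+1})$, which is immediate from the two entry formulas (indeed $q_{k+1}/p_k = (c_k a_k)/(c_{k+1} a_{k+1})$); since $p_k \leq 1/\delta$ and $\{a_k c_k/(a_{k+1} c_{k+1})\}$ is bounded by hypothesis, $Q$ is bounded as well. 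With $P, Q \in B(\ell^2)$ in hand, Proposition 5 applies and yields $M Q M^* = M^* P M$, so $M$ is supraposinormal with interrupter pair $(Q,P)$.

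It then remains to check $\delta Q \geq I \geq \delta P \geq 0$. The inequality $I \geq \delta P$ is exactly the estimate $p_k \leq 1/\delta$ already extracted from the lower inequality, and $P \geq 0$ was noted above. For $\delta Q \geq I$ I would again cross-multiply: the upper inequality $r_{k+1} \leq \delta r_k/(\delta + c_{k+1}^2 r_k)$ rearranges to $c_{k+1}^2 r_k r_{k+1} \leq \delta(r_k - r_{k+1})$, which is precisely $q_{k+1} \geq 1/\delta$, while the leftover entry is covered by $c_0 a_0 \leq \delta$, giving $q_0 = 1/(c_0 a_0) \geq 1/\delta$. Hence $\delta Q \geq I \geq \delta P \geq 0$, and since here $\delta_1 = \delta_2 = \delta$ (so in particular $\delta_1 \leq \delta_2$), Corollary 5 delivers the hyponormality of $M$.

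The cross-multiplication bookkeeping that converts each scalar bound into its diagonal operator inequality is entirely routine; the one place that requires care is the boundedness of $Q$, whose entries are not controlled by the hyponormality inequalities alone. The relation $q_{k+1} = p_k\,(a_k c_k)/(a_{k+1} c_{k+1})$ is what lets the third standing hypothesis do its work, and recognizing that the ratio $(a_k c_k)/(a_{k+1} c_{k+1})$ is exactly $q_{k+1}/p_k$ is the key observation that keeps the whole argument short.
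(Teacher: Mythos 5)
Your proposal is correct and follows essentially the same route as the paper: Proposition 5 supplies the interrupter pair $(Q,P)$, the two scalar inequalities are exactly the entrywise forms of $\delta Q \geq I \geq \delta P \geq 0$, and Corollary 5 with $\delta_1 = \delta_2 = \delta$ gives hyponormality. You have merely filled in the cross-multiplication details and the boundedness argument (via $q_{k+1}/p_k = c_k a_k/(c_{k+1}a_{k+1})$ and $p_k \leq 1/\delta$) that the paper's proof sketches with its remark about the auxiliary assumption $(*)$ being dispensable.
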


\begin{proof}  Assume the conditions of the hypothesis and also that \[sup \{\frac{c_{k+1}a_k - c_ka_{k+1}}{c_kc_{k+1}a_k^2}: k = 0, 1, 2, ....\} < \infty; \hspace{20mm} (*)\] this is enough to guarantee that  $sup \{\frac{c_{k+1}a_k - c_ka_{k+1}}{c_{k+1}^2a_ka_{k+1}}: k = 0, 1, 2, ....\} < \infty$ also.  \\ If $P$ and $Q$ are the operators defined in Proposition $5$, then $M^*PM=MQM^*$.   By Corollary $5$, $M$ will be hyponormal if there exists a $\delta > 0$ such that $\delta Q \geq  I \geq \delta P > 0$, and this leads to inequalities stated in the theorem.  We note that since $P \leq (1/\delta) I$, the assumption (*) was not needed in the hypothesis.
\end{proof}

\begin{Example}  (Toeplitz Matrix)  Suppose that $M$ is the factorable matrix with entries $m_{ij}=a_ic_j$ where $a_i=r^i$, $c_j=1/r^j$ for all $i,j$ where $0<r<1$.      One easily verifies that Theorem $6$ is satisfied with $\delta = 1/(1-r^2)$, so $M$ is hyponormal.
\end{Example}

We note that the conditions presented in Theorem $6$ are not necessary for the hyponormality of a factorable matrix.   For consider the case when $c_j = 1$ for each $j$ and $a_i= (i+3)/(i+2)^2$ for each $i$.   This example is known to be hyponormal since it satisfies the hypothesis of [\textbf{9}, Theorem 2.2], but it does not satisfy the inequality in Theorem 6 since that would require $1 \geq \delta \geq 12/11$, an impossibility.   

We point out that for $\delta = 1$, the inequality in Theorem $6$ reduces to the result in [\textbf{12}], which was obtained using a somewhat different approach, without invoking posinormality; for several examples that hold for $\delta = 1$, see that paper.

\subsection{Some Non-Hyponormal Examples}\

\vspace{2mm}

We now investigate posinormality and coposinormality for some non-hyponormal, supraposinormal factorable matrices.

\begin{Example}  (Fibonacci matrix; see [\textbf{3}])   We recall that the Fibonacci sequence $\{f_n\}$ is defined by the linear recurrence equations 
$f_0 = 0$, $f_1 = 1$, and 
\begin{center} $f_n = f_{n-1} + f_{n-2}$ \hspace{6mm} for $n \geq 2$.  \end{center} Let $M$ denote the factorable matrix with nonzero entries 

\vspace{2mm}
\hspace{5mm} $m_{ij}=a_ic_j$ where $a_i = 1/(f_{i+1}f_{i+2})$ and $c_j = f_{j+1}^2$ for $j \leq i$.  
\vspace{2mm}

\noindent   When $n \geq 3$,  \[c_na_1\prod_{j=2}^{n-1} (c_0a_0-c_ja_j) /( c_0^na_0^n) = f_{n+1}/f_n > 1,\] so it follows from [\textbf{12}, Theorem $2$] that $M$ is not a dominant operator and is hence also not hyponormal.  If \[P :\equiv  diag \{(f_{n+2}^2f_{n+3}-f_{n+1}^3)/(f_{n+1}f_{n+2}f_{n+3}) : n = 0, 1, 2, 3, ....\},\] and 
\[Q :\equiv  diag \{1, (f_{n+2}^2f_{n+3}-f_{n+1}^3)/f_{n+2}^3 : n = 0, 1, 2, 3, ....\}\] \hspace{24mm} $= diag \{(f_{n+1}^2f_{n+2}-f_{n}^3)/f_{n+1}^3 : n = 0, 1, 2, 3, ....\}$, 
\vspace{2mm}

\noindent it follows from Proposition $5$ that $M^*PM = MQM^*$.  It can be verified that  \[I \leq Q \leq 2I \textrm{ \hspace {10mm} and  \hspace {10mm}} (1/2)I \leq P \leq 4I;\] thus \[Q \geq I \geq (1/4)P,\] so by Corollary $5$, $M$  is posinormal.  Similarly, since \[2P \geq I \geq (1/2)Q,\] $M$ is also coposinormal.
\end{Example}

\begin{Example}  ($q$-Ces\`{a}ro matrix for $q > 1$; see [\textbf{1}], [\textbf{14}])   If $M$ is the factorable matrix with nonzero entries 

\hspace{5mm} $m_{ij}=a_ic_j$ where $a_i= (q-1)/(q^{i+1}-1)$ and $c_j=q^j$ for $0 \leq j \leq i$,

\noindent and if \[P :\equiv  diag \{(q^{n+1}-1)(q^{n+2}+q^{n+1}-1)/[q^{n+1}(q^{n+2}-1)] : n = 0, 1, 2, 3, ....\}\] and \[Q :\equiv diag \{1, (q^{n+2}+q^{n+1}-1)/q^{n+2}: n = 0, 1, 2, 3, ....\}\] 

\hspace{20mm} $= diag \{ 1+ 1/q - 1/q^{n+1}: n = 0, 1, 2, 3, ....\}$,
\vspace{1mm}

\noindent it follows from Proposition $5$ that $MQM^* = M^*PM$.  It is straightforward to check that  \[Q \geq I \geq (1/2) P \textrm{ \hspace {10mm} and  \hspace {10mm}} (q+1) P \geq I \geq [1/(q+1)]Q,\]  so we know from Corollary $5$ that $M$ is both posinormal and coposinormal for $q > 1$.  It is demonstrated in [\textbf{12}] that $M$ is not dominant and hence also not hyponormal.
\end{Example}

\begin{Example}  ($q$-Ces\`{a}ro matrix for $0 < q < 1$; see [\textbf{2}] , [\textbf{14}])   If $M$ is the factorable matrix with nonzero entries 

\hspace{5mm} $m_{ij}=a_ic_j$ where $a_i=(1-q)q^i/(1-q^{i+1})$ and $c_j= 1/q^j$ for $0 \leq j \leq i$, 

\noindent and if \[P:\equiv  diag \{(1-q^{n+1})(1+q-q^{n+2})/(1-q^{n+2}) : n = 0, 1, 2, 3, ....\}\] and \[Q :\equiv diag \{1, 1+q-q^{n+2}: n = 0, 1, 2, 3, ....\}\] 

\hspace{29mm} $= diag \{1+q-q^{n+1}: n = 0, 1, 2, 3, ....\}$,
\vspace{1mm}
 
 \noindent it follows from Proposition $5$ that $MQM^* =  M^*PM$.   One can easily check that  \[Q \geq I \geq (1/2) P \textrm{ \hspace {10mm} and  \hspace {10mm}} [(q+1)/q] P \geq I \geq [q/(q+1)]Q,\]  so $M$ is both posinormal and coposinormal for $0 < q <1$.  We know from [\textbf{12}] that $M$ is not dominant and not hyponormal.
\end{Example}

\section{Conclusion}

We close with two questions that seem natural but whose answers have not been found here:  (1) Does there exist an operator $A$ that is totally supraposinormal but neither dominant nor codominant?  (2) Does there exist an operator $A$ that is totally supraposinormal but neither posinormal nor coposinormal?

\bigskip

\end{document}